\numberwithin{equation}{section}
\newtheorem{theorem}{Theorem}[section]
\newtheorem{lemma}[theorem]{Lemma}
\newtheorem{question}[theorem]{Question}
\theoremstyle{definition}
\newtheorem{remark}[theorem]{Remark}
\DeclareMathOperator{\diam}{diam}
\DeclareMathOperator{\length}{length}
\newtheorem*{xx}{Theorem \ref{thm}}
\begin{document}

\title{Intersection of continua and rectifiable curves}

\author{Rich\'ard Balka}
\address{Alfr\'ed R\'enyi Institute of Mathematics,
PO Box 127, 1364 Budapest, Hungary}
\email{balka.richard@renyi.mta.hu}
\thanks{We gratefully acknowledge the support of the
Hungarian Scientific Research Fund grant no.~72655.}

\author{Viktor Harangi}
\address{Alfr\'ed R\'enyi Institute of Mathematics,
PO Box 127, 1364 Budapest, Hungary}
\email{harangi.viktor@renyi.mta.hu}

\date{}

\begin{abstract}
We prove that for any non-degenerate continuum $K \subseteq \mathbb{R}^d$
there exists a rectifiable curve such that
its intersection with $K$ has Hausdorff dimension $1$.
This answers a question of B.~Kirchheim.
\end{abstract}

\keywords{Continuum, rectifiable curve, Hausdorff dimension}

\subjclass[2010]{28A78}

\maketitle

\section{Introduction}

A topological space $K$ is called a \emph{continuum}
if it is compact and connected.
The following question was asked by B.~Kirchheim \cite{BK}.

\begin{question} \label{q}
Does there exist a non-degenerate curve (or more generally, a continuum)
$K\subseteq \mathbb{R}^{d}$ such that every rectifiable curve intersects $K$
in a set of Hausdorff dimension less than $1$?
\end{question}

The motivation behind this question was that
in \cite[Examples (b), p. 208.]{G}
Gromov implicitly suggested that such curves exist.
In this paper we answer Question \ref{q} in the negative.

\begin{xx}
For any non-degenerate continuum $K\subseteq \mathbb{R}^{d}$ there exists
a rectifiable curve such that its intersection with $K$
has Hausdorff dimension $1$.
\end{xx}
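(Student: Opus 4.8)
The upper bound is immediate: a rectifiable curve $\Gamma$ has finite length, hence $\mathcal{H}^1(\Gamma)<\infty$ and $\dim_H(\Gamma\cap K)\le\dim_H\Gamma\le 1$. So the whole content is to produce a rectifiable $\Gamma$ with $\dim_H(\Gamma\cap K)\ge 1$. My first move is to reduce this lower bound to a statement about projections. Since $K$ is non-degenerate I can choose a unit vector $e$ for which the orthogonal projection $\pi(x)=\langle x,e\rangle$ satisfies $\pi(K)=[0,w]$ with $w>0$; here I use that $\pi$ is continuous and $K$ connected, so $\pi(K)$ is a non-degenerate interval. As $\pi$ is $1$-Lipschitz, $\dim_H\pi(A)\le\dim_H A$ for every $A$. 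Consequently it suffices to build a rectifiable curve $\Gamma$ for which $\pi(\Gamma\cap K)$ has Hausdorff dimension $1$: this forces $\dim_H(\Gamma\cap K)\ge 1$, and with the trivial upper bound gives equality. One easy case is worth isolating: if some non-degenerate subcontinuum $K'\subseteq K$ has $\mathcal{H}^1(K')<\infty$, then $K'$ is a continuum of finite length, hence a rectifiable curve, and $\mathcal{H}^1(K')\ge\diam K'>0$ gives $\dim_H K'=1$, so $\Gamma=K'$ finishes. Thus the genuinely hard case is when every non-degenerate subcontinuum of $K$ has infinite length, as happens for hereditarily indecomposable continua of Hausdorff dimension $>1$.

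For the general case I would build $\Gamma$ and the set it captures by a multiscale construction indexed by a tree. The two facts about continua I would lean on are: (i) $K$ is \emph{well chained}, i.e. for every $\varepsilon>0$ any two points of $K$ are joined by an $\varepsilon$-chain inside $K$; and (ii) (boundary bumping) for every point of a continuum and every $r$ below its diameter there is a subcontinuum through that point of diameter comparable to $r$. Using these I would select a nested family of subcontinua $\{L_s\}$, indexed by finite strings $s$, so that the sub-pieces $L_{s0},L_{s1},\dots$ of $L_s$ are pairwise separated and their projections $\pi(L_s)$ realise a Moran (Cantor) construction in $[0,w]$ whose limit set $E$ has $\dim_H E=1$. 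Choosing a point $x_s\in L_s$ for each $s$, the compact set $C=\bigcap_n\bigcup_{|s|=n}L_s$ lies in $K$ and satisfies $\pi(C)=E$, so $\dim_H C\ge 1$. It then remains to lay a single finite-length curve through (a superset of) $C$: I would order the pieces along the curve and join consecutively chosen points by straight segments, so that the length of $\Gamma$ splits into a within-piece part, controlled by $\sum_s\diam L_s$, and a jump part coming from the gaps between consecutive pieces.

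The main obstacle is precisely the simultaneous control of these two budgets. Ordering the pieces by their $\pi$-coordinate keeps the \emph{longitudinal} total variation (along $e$) bounded by $w$, but the \emph{transverse} variation of the connecting segments is a priori unbounded: consecutive pieces, though close in projection, may be far apart in the $(d-1)$ directions orthogonal to $e$, and there are exponentially many of them, so a naive connection gives infinite length. Overcoming this is where the connectedness of $K$ must be used quantitatively: rather than fixing the Moran data in advance, I would choose the subcontinua adaptively, using well-chainedness at the current scale to guarantee that each newly selected piece is reachable from its predecessor by a chain in $K$ whose transverse displacement is summable over the whole construction, while keeping the pieces separated enough in projection that $\dim_H E=1$ survives. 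Balancing the finiteness of the total transverse length against the richness needed to keep the projected limit set one-dimensional is the crux; once the length is shown to be finite, a standard limiting argument produces an honest rectifiable curve $\Gamma\supseteq C$, and the projection estimate yields $\dim_H(\Gamma\cap K)=1$.
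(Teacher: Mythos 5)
Your reductions are fine (the upper bound, the projection trick, the finite-length subcontinuum case), and your overall plan --- nested subcontinua indexed by a tree, a Moran-type limit set, a tour connecting the pieces --- points in the right direction. But the proof has a genuine gap, and it is exactly the one you flag yourself as ``the crux'': you never show how to choose the subcontinua $L_s$ so that the total length of the connecting segments is finite. Well-chainedness and boundary bumping alone do not suffice, because a subcontinuum whose projection $\pi(L_s)$ is a tiny interval can still have diameter comparable to $\diam K$ in the transverse directions; with exponentially many pieces per level, the transverse jumps then give infinite length. What is missing is a quantitative localization statement: if a continuum spans a cube $Q$ of side $\ell$, then one can find $N$ non-overlapping subcubes of side $\ell/N$, each containing a subcontinuum of $K$ spanning two opposite faces of that subcube. (The paper proves exactly this by slicing $Q$ into $N$ slabs, applying the boundary-bumping lemma in each slab, and then shrinking each resulting continuum into a genuine small cube by $d$ further slicing steps.) With that lemma every piece has diameter $O(\sqrt{d}\,\ell/N)$, so consecutive pieces sharing a parent are close and the jump budget is controlled by $\sum_s \diam L_s$.

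There is a second, structural issue: you aim to produce a single limit set $C$ with $\dim_H C\ge 1$ and cover it by one rectifiable curve, which forces you to run the length estimate at the critical exponent, where the relevant series is borderline. The paper avoids this entirely: for each $\varepsilon>0$ it keeps only $N-1$ of the $N$ subcubes at every level, producing a set $A\subseteq K$ with $1-\varepsilon\le\dim_H A=\overline{\dim}_M(A)=\frac{\log(N-1)}{\log N}<1$. The strict inequality $\overline{\dim}_M(A)<1$ makes the total length of a tour through all dyadic cubes meeting $A$ a convergent geometric series $\sum_n c\,2^{(s-1)n}$, so such an $A$ is automatically covered by a rectifiable curve; one then concatenates countably many such curves (for $\varepsilon=1/n$, built inside shrinking balls and truncated to length $2^{-n}$) to reach Hausdorff dimension exactly $1$ in the union. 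Until you either prove the localization lemma and close the length estimate at the critical exponent, or switch to the subcritical-plus-countable-union strategy, the argument does not go through.
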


\begin{remark}
Finding a $1$-dimensional intersection is the best we can hope for,
since any purely unrectifiable curve $K$ in the plane
(e.g., the Koch snowflake curve) has the property that
the intersection of $K$ and a rectifiable curve has zero $\mathcal{H}^{1}$ measure.
\end{remark}


\section{Preliminaries} \label{s:prelim}

The diameter and the boundary of a set $A$ are denoted by
$\diam A$ and $\partial A$, respectively.
For $A\subseteq \mathbb{R}^{d}$ and $s \ge 0$ the
\emph{$s$-dimensional Hausdorff measure} is defined as
\begin{align*}
\mathcal{H}^{s}(A)&=\lim_{\delta\to 0+}\mathcal{H}^{s}_{\delta}(A)
\mbox{, where}\\
\mathcal{H}^{s}_{\delta}(A)&=\inf \left\{ \sum_{i=1}^\infty (\diam
A_{i})^{s}: A \subseteq \bigcup_{i=1}^{\infty} A_{i},~
\forall i \diam A_i \le \delta \right\}.
\end{align*}
Then the \emph{Hausdorff dimension} of $A$ is
\[
\dim_{H} A = \sup\{s \ge 0: \mathcal{H}^{s}(A)>0\}.
\]
Let $A\subseteq \mathbb{R}^{d}$ be non-empty and bounded, and let $\delta>0$. Set
$$N(A,\delta)=\min\left \{k: A\subseteq \bigcup_{i=1}^{k} A_i,~
\forall i \ \diam A_i\leq \delta \right\}.$$
The \emph{upper Minkowski dimension} of $A$ is defined as
$$\overline{\dim}_{M}(A)=\limsup_{\delta \to 0+}
\frac{\log N(A,\delta)}{-\log \delta}.$$
If $A\subseteq \mathbb{R}^{d}$ is non-empty and bounded, then it follows
easily from the above definitions that
$$\dim_{H}A\leq \overline{\dim}_{M}(A).$$
For more information on these concepts see \cite{F} or \cite{Ma}.

A continuous map $f\colon [a,b]\to \mathbb{R}^{d}$ is called a \emph{curve}.
Its \emph{length} is defined as
$$\length (f)=\sup \left\{\sum_{i=1}^{n} |f(x_{i})-f(x_{i-1})|:
n\in \mathbb{N}^{+},~ a=x_0<\dots<x_n=b \right\}.$$
If $\length(f)<\infty$, then $f$ is said to be \emph{rectifiable}.
We say that $f$ is \emph{naturally parametrized}
if for all $x,y\in [a,b]$, $x\leq y$ we have
$$\length \left(f|_{[x,y]}\right)=|x-y|.$$
We simply write $\Gamma=f([a,b])$ instead of $f$
if the parametrization is obvious or not important for us.
For every non-degenerate rectifiable curve
$\Gamma$ we have $0<\mathcal{H}^{1} (\Gamma)<\infty$, so $\dim_{H} \Gamma=1$.
If $|f(x)-f(y)|\leq |x-y|$ for all $x,y\in [a,b]$,
then $f$ is called \emph{1-Lipschitz}.
Every naturally parametrized curve is clearly 1-Lipschitz.

\section{The proof} \label{s:proof}

First we need some lemmas. The following lemma is probably known,
but we could not find a reference, so we outline its proof.

\begin{lemma} \label{l:cover}
If a non-empty bounded set $A \subseteq \mathbb{R}^d$ has
upper Minkowski dimension less than $1$,
then a rectifiable curve covers $A$.
\end{lemma}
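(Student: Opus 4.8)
The plan is to build, from the covering data at all dyadic scales, a single continuum $E$ of finite $\mathcal{H}^{1}$-measure with $A\subseteq E$, and then to appeal to the classical fact that every continuum of finite $\mathcal{H}^{1}$-measure is the image of a rectifiable curve; that curve then covers $A$. First I would fix a real number $s$ with $\overline{\dim}_{M}(A)<s<1$, so that $N(A,\delta)\le \delta^{-s}$ for all small $\delta$. Passing to a dyadic grid, for each $n$ let $\mathcal{Q}_n$ be the family of half-open dyadic cubes of side $2^{-n}$ that meet $A$ (so the $\mathcal{Q}_n$ are automatically nested and $\mathcal{Q}_0$ is finite since $A$ is bounded). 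A cube of side $2^{-n}$ has diameter $\sqrt{d}\,2^{-n}$ and any diameter-$\delta$ set meets a bounded number of them, so $\#\mathcal{Q}_n \le C_d\, 2^{ns}$ for large $n$, with $C_d$ depending only on $d$.

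Next I would connect the cube centers across scales. Writing $c_Q$ for the center of $Q$, for each $n$ and each $Q\in\mathcal{Q}_{n+1}$ let $\hat Q\in\mathcal{Q}_n$ be the dyadic parent of $Q$, and join $c_Q$ to $c_{\hat Q}$ by a straight segment; in addition join the finitely many centers of $\mathcal{Q}_0$ by an arbitrary finite polygonal path. Let $T$ be the union of all these segments. Since $c_Q,c_{\hat Q}\in\hat Q$, each segment at level $n$ has length $\le \sqrt{d}\,2^{-n}$, and there are $\#\mathcal{Q}_{n+1}$ of them, so the level-$n$ segments have total length at most $C_d\sqrt{d}\,2^{(n+1)s}2^{-n}=C'2^{-n(1-s)}$. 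Because $s<1$, summing over $n$ gives a finite total length $L$; thus $\mathcal{H}^{1}(T)\le L<\infty$, and $T$ is connected by construction.

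It then remains to control the closure $E:=\overline{T}$. For the containment $A\subseteq E$: each $a\in A$ lies in a nested chain $Q_0(a)\supseteq Q_1(a)\supseteq\cdots$ with $Q_n(a)\in\mathcal{Q}_n$; all the centers $c_{Q_n(a)}$ lie in $T$ and satisfy $|c_{Q_n(a)}-a|\le \sqrt{d}\,2^{-n}\to 0$, so $a\in\overline{T}=E$. For finiteness of $\mathcal{H}^{1}(E)$: every segment of level $\ge n$ lies inside $U_n:=\bigcup_{Q\in\mathcal{Q}_n}Q$, which is covered by $\#\mathcal{Q}_n\le C_d 2^{ns}$ sets of diameter $\sqrt{d}\,2^{-n}$, giving $\mathcal{H}^{1}_{\sqrt{d}\,2^{-n}}(U_n)\le C_d\sqrt{d}\,2^{-n(1-s)}\to 0$; combining this with the finitely many lower-level segments (whose total length is at most $L$) yields $\mathcal{H}^{1}(E)\le L<\infty$.

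Hence $E$ is a continuum of finite $\mathcal{H}^{1}$-measure containing $A$, and the classical parametrization theorem produces a rectifiable curve whose image is $E\supseteq A$. The main obstacle is precisely this last step: the nontrivial structural result that a continuum of finite length admits a surjective rectifiable (indeed Lipschitz) parametrization. This is the part that is \emph{probably known} but must be carefully cited or reproved; by contrast, the length bookkeeping is routine once the dyadic-parent construction is in place, the key point being only that $s<1$ makes $\sum_n 2^{-n(1-s)}$ converge.
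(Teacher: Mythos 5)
Your proof is correct, but it takes a genuinely different route from the paper's. Both arguments start from the same combinatorial skeleton --- the tree of dyadic cubes meeting $A$, the cardinality bound $\#\mathcal{Q}_n \lesssim 2^{ns}$, and the observation that $s<1$ makes the total length $\sum_n 2^{-n(1-s)}$ of the connecting segments converge --- but they diverge at the parametrization step. The paper never forms the set $E=\overline{T}$ and never invokes a structure theorem for continua of finite length; instead it parametrizes the tree directly: at each vertex $P_{n,i}$ (a distinguished corner of $Q_{n,i}$ rather than the center, which is immaterial) it inserts a closed broken-line loop $\Gamma_{n,i}$ visiting the children of $Q_{n,i}$, obtaining naturally parametrized, hence $1$-Lipschitz, curves $g_n$ that converge uniformly to a $1$-Lipschitz limit $f$ whose image contains $A$; this is in effect a hands-on depth-first traversal of your tree $T$. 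What your approach buys is a clean separation of the measure-theoretic bookkeeping ($A\subseteq E$, $\mathcal{H}^{1}(E)<\infty$) from the topological work, at the cost of importing the nontrivial classical theorem that a continuum of finite $\mathcal{H}^{1}$-measure is the image of a Lipschitz map on an interval (going back to Wa\.{z}ewski; see also Falconer, \emph{The Geometry of Fractal Sets}, Lemma 3.12) --- you correctly identify this as the one step that must be cited or reproved. What the paper's approach buys is self-containedness: the uniform-limit argument reproves exactly the special, tree-like case of that theorem that is needed, which is presumably why the authors, who remark that the lemma is ``probably known'' but lacked a reference, chose it. Your length estimate for $E$ is sound (the level-$\ge n$ part of $T$ sits in $U_n$, whose premeasure $\mathcal{H}^{1}_{\sqrt{d}\,2^{-n}}$ tends to $0$, so passing to the closure adds a set of $\mathcal{H}^{1}$-measure zero), so no repair is needed.
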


\begin{proof} We can assume that $A$ is compact and $A \subseteq [0,1]^d$,
since we can take its closure and transform it into the unit
cube with a similarity, this does not change
the upper Minkowski dimension of the set
and the fact whether it can be covered by a rectifiable curve.

For every $n\in \mathbb{N}$ we divide $[0,1]^d$ into non-overlapping
cubes with edge length $2^{-n}$ in the natural way,
and we denote the cubes that intersect $A$ by
$$ Q_{n, 1}, Q_{n, 2}, \ldots, Q_{n, r_n} ,$$
where $r_n$ is the number of such cubes.
As every set with diameter at most $2^{-n}$ can intersect
at most $3^d$ of the above cubes, we obtain $r_n\leq 3^d N(A,2^{-n})$.
Let us fix $s$ such that
$\overline{\dim}_{M}(A)<s<1$. By the definition of upper Minkowski
dimension there exists a constant $c_1\in \mathbb{R}$ such that for all $n\in
\mathbb{N}$
\begin{equation} \label{rn}  r_n \leq c_1 \cdot 2^{sn}.
\end{equation}
Let $n\in \mathbb{N}$ and $i\in \{1,\dots,r_n\}$ be arbitrarily fixed.
Let $P_{n,i}$ be the vertex of $Q_{n,i}$ that is the closest to the origin.
If $Q_{n+1, j_1}, \ldots, Q_{n+1, j_m}$ are the next level
cubes contained by $Q_{n,i}$, then consider the broken line
$$\Gamma_{n,i}=P_{n,i} P_{n+1, j_1} P_{n+1, j_2} \ldots P_{n+1,j_m} P_{n,i} .$$
Thus
\begin{equation} \label{leng} \length(\Gamma_{n,i})\leq
(m+1) \diam Q_{n,i} \leq 2m \sqrt{d} 2^{-n}.
\end{equation}
Let $l_n$ be the sum of these lengths for all $i\in \{1,\dots,r_n\}$.
Then \eqref{leng} and \eqref{rn} imply
\begin{equation} \label{ln}
l_n\leq 2 r_{n+1} \sqrt{d} 2^{-n}
\leq 2c_1 \cdot 2^{s(n+1)}  \sqrt{d} 2^{-n} = c_2 2^{(s-1)n},
\end{equation}
where $c_2=c_1 \sqrt{d} 2^{s+1}$. We set
$$L_n=\sum_{k=0}^{n}l_k \quad \textrm{and} \quad L=\sum_{k=0}^{\infty} l_k.$$
Since $s<1$, \eqref{ln} implies $L < \infty$.

Now we define the rectifiable curve covering $A$.
First we take the broken line $\Gamma_0=\Gamma_{0,1}$
with its natural parametrization $g_{0}\colon [0,L_0]\to \Gamma_0$.
Assume that the curves $g_{k}\colon [0,l_k]\to \Gamma_k$ are
already defined for all $k<n$. At every point $P_{n,i}$,
$i\in \{1,\dots, r_n\}$, we insert the broken line
$\Gamma_{n,i}$ in $\Gamma_{n-1}$, so we obtain
a naturally parametrized curve $g_{n}\colon [0,L_{n}]\to \Gamma_{n}$.

For every $n\in \mathbb{N}$ let us define $f_{n}\colon [0,L]\to \Gamma_n$ such that
$$f_{n}(x)= \begin{cases} g_{n}(x) & \textrm{ if } x\in [0,L_n], \\
g_{n}(L_n) & \textrm{ if } x\in [L_n,L].
\end{cases} $$
Now we prove that the sequence $\langle f_n \rangle$ uniformly converges.
Let us fix $n\in \mathbb{N}$ and $x\in [0,L]$ arbitrarily.
As $\sum_{n=0}^{\infty} l_n<\infty$, it is enough to prove that
$|f_{n+1}(x)-f_{n}(x)|\leq l_{n+1}$. By construction
there exists $y\in [0,L]$ such that
$f_{n}(x)=f_{n+1}(y)$ and $|x-y| \leq l_{n+1}$.
Since $g_{n+1}$ is naturally parametrized, we obtain that
$$|f_{n+1}(x)-f_{n}(x)|= |f_{n+1}(x)-f_{n+1}(y)|\leq |x-y|\leq l_{n+1}.$$
Therefore $\langle f_n \rangle$ uniformly converges to
some $f:[0,L] \to \mathbb{R}^d$. As a uniform limit of 1-Lipschitz functions
$f$ is also 1-Lipschitz, thus rectifiable.

It remains to prove that $A\subseteq f([0,L])$. Let $\vec{z} \in A$.
We need to show that there is $x\in [0,L]$ such that $f(x)=\vec{z}$.
For every $n\in \mathbb{N}$ there exists $i_n\in \{1,\dots, r_n\}$
such that $\vec{z} \in Q_{n,i_n}$. Let $x_n\in [0,L]$ such that
$f_{n}(x_n)=P_{n,i_n}$ for all $n\in \mathbb{N}$. By choosing a subsequence
we may assume that $x_{n}$ converges to some $x \in [0,L]$.
Therefore
$$f(x)=\lim_{n\to \infty} f_{n} (x_{n})=
\lim_{n\to \infty} P_{n,i_{n}}=\vec{z}.$$
The proof is complete.
\end{proof}

The next lemma is \cite[Lemma 6.1.25]{E}.

\begin{lemma} \label{l:comp}
If $A$ is a closed subspace of a continuum $X$
such that $\emptyset \neq A\neq X$,
then for every connected component $C$ of $A$
we have $C\cap \partial A\neq \emptyset$.
\end{lemma}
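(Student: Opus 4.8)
The plan is to argue by contradiction, and the guiding idea is to convert a set that is merely relatively clopen in $A$ into a set that is genuinely open and closed in $X$; since $X$ is a continuum (in particular connected), no such nontrivial set can exist, and this contradiction will force every component of $A$ to reach $\partial A$.

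Concretely, suppose toward a contradiction that some component $C$ of $A$ satisfies $C\cap\partial A=\emptyset$. Because $A$ is closed in $X$, its boundary is $\partial A=A\setminus\operatorname{int}_X A$, where $\operatorname{int}_X$ denotes the interior taken in $X$; hence the assumption says exactly that $C\subseteq\operatorname{int}_X A$. My goal is then to trap $C$ inside a subset of $A$ that stays within $\operatorname{int}_X A$ and is clopen in $A$.

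To produce such a set I would invoke the standard fact that in a compact Hausdorff space the connected component of a point coincides with its quasicomponent, i.e.\ with the intersection of all relatively clopen sets containing it. Applying this inside the compact space $A$, I may write $C=\bigcap_{\alpha}U_\alpha$, where each $U_\alpha$ is clopen in $A$ and contains $C$. Now $\partial A$ is closed in the compact space $X$, hence compact, and by hypothesis $\partial A\cap\bigcap_\alpha U_\alpha=\partial A\cap C=\emptyset$. Since each $U_\alpha$ is closed in $A$, the sets $\partial A\cap U_\alpha$ form a family of compact sets with empty total intersection, so by the finite intersection property finitely many of them already intersect in $\emptyset$. Their intersection $U:=U_{\alpha_1}\cap\dots\cap U_{\alpha_k}$ is again clopen in $A$, contains $C$, and satisfies $U\cap\partial A=\emptyset$, i.e.\ $U\subseteq\operatorname{int}_X A$.

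It remains to check that this $U$ is clopen in $X$ itself. Closedness is immediate, since $U$ is closed in $A$ and $A$ is closed in $X$. For openness, write $U=V\cap A$ with $V$ open in $X$; because $U\subseteq\operatorname{int}_X A$ we get $U=V\cap\operatorname{int}_X A$, an intersection of two sets open in $X$, so $U$ is open in $X$. Thus $U$ is clopen in $X$, with $U\neq\emptyset$ (as $C\neq\emptyset$) and $U\neq X$ (as $U\subseteq A\neq X$), contradicting the connectedness of $X$. I expect the only delicate point—the main obstacle—to be the component/quasicomponent identification together with the finite-intersection extraction of a single clopen $U$ disjoint from $\partial A$; both steps rely on compactness of $X$, which is precisely the content of the hypothesis that $X$ is a continuum rather than just a connected space.
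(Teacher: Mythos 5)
Your argument is correct. Note, however, that the paper does not prove this lemma at all: it simply quotes it as \cite[Lemma 6.1.25]{E} from Engelking. What you have written is essentially the standard textbook proof of that result (the ``boundary bumping'' lemma): the key step, passing from the component $C$ to a single clopen subset $U$ of $A$ with $C\subseteq U\subseteq \operatorname{int}_X A$, is exactly the \v{S}ura-Bura-type argument (component $=$ quasicomponent in a compact Hausdorff space, plus the finite intersection property applied to the compact set $\partial A$), and the upgrade of $U$ from clopen-in-$A$ to clopen-in-$X$ via $U=V\cap\operatorname{int}_X A$ is carried out correctly. Two small points worth being aware of: the identification of components with quasicomponents needs $A$ to be compact \emph{Hausdorff}, which the paper's definition of continuum does not formally require but which holds in the only setting where the lemma is used ($X\subseteq\mathbb{R}^d$); and your finite subfamily is automatically nonempty since $A$ itself is a clopen set of $A$ containing $C$ (and if $\partial A=\emptyset$ one already has the contradiction with $A$ clopen in $X$). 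Neither affects the validity of the proof.
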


We also need the following technical lemma.

\begin{lemma} \label{l:cube}
Suppose that $K\subseteq \mathbb{R}^{d}$ is a continuum
contained by a unit cube $Q$ and
$K$ has a point on each of two opposite sides of $Q$.
Then for any positive integer $N$
we can find $N$ pairwise non-overlapping cubes $Q_1, \ldots, Q_N$
with edge length $\frac 1N$ such that for each $i\in \{1,\dots, N\}$
there exists a continuum $K_i \subseteq K \cap Q_i$ with the property that
$K_i$ has a point on each of two opposite sides of $Q_i$.
\end{lemma}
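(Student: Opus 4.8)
The plan is to reduce everything to a \emph{crossing lemma} for continua and then iterate it. Normalize so that $Q=[0,1]^d$ and $K$ meets the opposite faces $\{x_1=0\}$ and $\{x_1=1\}$; for a bounded set $M$ let $w_i(M)=\max_{x\in M}x_i-\min_{x\in M}x_i$ denote its extent (oscillation) in the $i$-th coordinate. Slice $Q$ into the $N$ slabs $S_k=\{(k-1)/N\le x_1\le k/N\}$ for $k=1,\dots,N$. In each slab I will produce one cube $Q_k$ of edge $1/N$ together with a crossing sub-continuum. Since the $Q_k$ will be chosen with first coordinate ranging over exactly $[(k-1)/N,k/N]$, their interiors are separated in the $x_1$-direction, so the $Q_k$ are automatically pairwise non-overlapping; this is the source of both the number $N$ and the disjointness.

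The basic tool is the following: if a continuum $M$ contains a point with $x_j\le c$ and a point with $x_j\ge e$, where $c<e$, then some component of $A=M\cap\{c\le x_j\le e\}$ meets both bounding hyperplanes $\{x_j=c\}$ and $\{x_j=e\}$. I would prove this by contradiction: if no component of the compact set $A$ met both of the disjoint closed sets $A\cap\{x_j=c\}$ and $A\cap\{x_j=e\}$, then the standard separation theorem for components of a compact metric space (closely related to the boundary-bumping Lemma~\ref{l:comp}) yields a relatively clopen partition $A=U\sqcup V$ with these two faces lying in $U$ and $V$ respectively; adjoining the tails $M\cap\{x_j\le c\}$ to $U$ and $M\cap\{x_j\ge e\}$ to $V$ produces a separation of $M$ into two nonempty disjoint closed sets, contradicting connectedness. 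Applying this with $j=1$, $c=(k-1)/N$, $e=k/N$ (using the points of $K$ on $\{x_1=0\}$ and $\{x_1=1\}$) gives in each slab a sub-continuum $M_k\subseteq K\cap S_k$ with $w_1(M_k)=1/N$.

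The $M_k$ only cross a slab, not a cube, so the heart of the argument is to shrink $M_k$ in the remaining directions while retaining a full crossing in \emph{some} direction. Starting from $M_k$, I repeat the following step: as long as some direction $j$ satisfies $w_j>1/N$, pick such a $j$, take the window $[\min_j,\min_j+1/N]$ (contained in the current range since $w_j>1/N$), and replace the current continuum by a component meeting both faces of this window via the crossing lemma. The new continuum has $w_j=1/N$, and since it is a sub-continuum of the old one, every other extent can only have decreased. Hence each step deletes one index from $\{i:w_i>1/N\}$ and adds none, so after at most $d$ steps the process stops with a sub-continuum $L_k=K_k$ all of whose extents are $\le 1/N$ and whose last-cut direction has extent exactly $1/N$. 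Taking $Q_k$ to be the cube of edge $1/N$ whose $x_1$-window is the slab window $[(k-1)/N,k/N]$ and whose other windows are positioned to contain the (width $\le 1/N$) ranges of $L_k$, we obtain $L_k\subseteq K\cap Q_k$, $Q_k\subseteq S_k$, and $L_k$ touching the two opposite faces of $Q_k$ in the last-cut direction.

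The main obstacle is precisely the requirement of \emph{opposite} faces: a naive application of Lemma~\ref{l:comp} to $M_k$ intersected with a grid cube only forces a component to reach \emph{some} face of the cube, which may be adjacent rather than opposite to the starting face (the diagonal segment from a corner already exhibits this). The iterated shrinking circumvents this by letting the crossing direction change: we never insist on preserving the original $x_1$-crossing, only on maintaining, after each cut, one direction in which the continuum spans a full $1/N$. The two points needing care are verifying that this spanning invariant survives each cut and that the terminal cube can be placed inside its slab so the $Q_k$ remain non-overlapping; the remaining verifications are routine.
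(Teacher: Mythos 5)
Your argument is correct and follows essentially the same route as the paper: slice $Q$ into $N$ slabs along $x_1$, extract a crossing continuum in each slab, then shrink it coordinate by coordinate with a width-$\frac1N$ window anchored at the coordinate minimum, letting the guaranteed crossing direction migrate to the most recently cut coordinate (the paper's index $m(j)$ is exactly your ``last-cut direction''), and finally place each cube inside its slab to get non-overlapping. The only real difference is that you obtain the slab-crossing continua from a symmetric two-sided crossing lemma (proved via the coincidence of components and quasi-components in compact spaces and a clopen separation), whereas the paper gets by with the one-sided boundary-bumping Lemma~\ref{l:comp} applied iteratively inward from the two boundary points $\vec{x}$ and $\vec{y}$; both are standard and correct.
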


\begin{proof}
Let $N\in \mathbb{N}^{+}$ be fixed. Set $S_0=\{0\}\times [0,1]^{d-1}$
and for all $i\in \{1,\dots, N\}$ consider
$$S_i=\{i/N\}\times [0,1]^{d-1} \quad \textrm{and} \quad
T_i=\left[(i-1)/N,i/N\right]\times [0,1]^{d-1}.$$
We may assume that $Q=[0,1]^{d}$ and
that the two opposite sides intersecting $K$
are $S_0$ and $S_N$. Let $\vec{x}\in K \cap S_0$
and $\vec{y} \in K \cap S_N$.

Now we prove that for each $i\in \{1,\dots ,N\}$
there is a continuum $C_i\subseteq K\cap T_i$ such that
$C_i\cap S_{i-1} \neq \emptyset$ and $C_i\cap S_i\neq \emptyset$.
Let $C_1$ be the component of $K\cap T_1$ containing $\vec{x}$.
Applying Lemma \ref{l:comp} for $X=K$, $A=K \cap T_1$, and $C=C_1$
yields that $C_1 \cap S_1 \neq \emptyset$. Let $C_2'$ be the component of
$K\cap \left( T_2\cup \dots \cup T_N\right)$ containing $\vec{y}$.
Similarly as above, we obtain $C_2'\cap S_1\neq \emptyset$.
If we continue this process, we get the required continua $C_2,\dots, C_N$.

Finally, for each $i\in \{1,\dots,N\}$ we construct
a cube $Q_i \subseteq T_i$ with edge length $\frac 1N$ and
a continuum $K_i\subseteq Q_i$ such that
$K_i$ has a point on each of two opposite sides of $Q_i$.
Clearly, the cubes $Q_i$ will be pairwise non-overlapping,
and it is enough to construct $Q_1$ and $K_1$
(one can get $Q_i, K_i$ similarly).
Let us consider the standard basis of $\mathbb{R}^d$:
$\vec{e}_1=(1,0,\dots,0),\dots,\vec{e}_d=(0,0,\dots,1)$.
Set $A_1=C_1$, $V_1=\{0\}\times \mathbb{R}^{d-1}$, $W_1=\{1/N\}\times \mathbb{R}^{d-1}$,
$Z_1=[0,1/N]\times \mathbb{R}^{d-1}$, and $m(1)=1$.
Then the definitions yield that $A_1$ has a point
on both $V_{m(1)}$ and $W_{m(1)}$.
Let $j\in \{2,\dots, d\}$ and assume that
$A_{k}$, $V_{k}$, $W_{k}$, $Z_{k}$, and $m(k)$
are already defined for all $k<j$ such that
$A_k$ has a point on both $V_{m(k)}$ and $W_{m(k)}$.
Let $\vec{x}_j\in A_{j-1}$ be a point which has minimal $j$th coordinate,
and let $V_j$ be the affine hyperplane that is orthogonal to
$\vec{e}_j$ and contains $\vec{x}_j$.
Set $W_j=V_j+\frac {1}{N} \vec{e}_j$,
and let $Z_j$ be the closed strip between $V_j$ and $W_j$.
If $A_{j-1}\subseteq Z_j$ then let $A_j=A_{j-1}$ and $m(j)=m(j-1)$.
If $A_{j-1} \nsubseteq Z_j$ then let $A_j$ be
the component of $\vec{x}_j$ in $A_{j-1}\cap Z_j$ and $m(j)=j$,
in this case Lemma \ref{l:comp} yields $A_{j}\cap W_j\neq \emptyset$.
Thus $A_j$ has a point on both $V_{m(j)}$ and  $W_{m(j)}$.
Let $Q_1=\bigcap _{j=1}^{d} Z_j$ and $K_1=A_d$.
Then $Q_1\subseteq S_1$ is a cube with edge length $\frac 1N$
and $K_1\subseteq Q_1$ is a continuum.
As $K_1$ has a point on both $V_{m(d)}$ and $W_{m(d)}$,
we obtain that $K_1$ has a point on each of two opposite sides of $Q_1$.
The proof is complete.
 \end{proof}

 Now we are ready to prove Theorem \ref{thm}.

\begin{theorem} \label{thm}
For any non-degenerate continuum $K\subseteq \mathbb{R}^{d}$ there exists
a rectifiable curve such that its intersection with $K$
has Hausdorff dimension $1$.
\end{theorem}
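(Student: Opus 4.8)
The plan is to exhibit $\Gamma$ as a countable concatenation of short rectifiable curves, each covering a ``fat'' Cantor piece of $K$. Since any intersection $\Gamma\cap K$ sits inside the rectifiable curve $\Gamma$, we have $\mathcal H^1(\Gamma\cap K)\le \mathcal H^1(\Gamma)<\infty$, so the bound $\dim_H(\Gamma\cap K)\le 1$ is automatic; the whole difficulty is to force $\dim_H(\Gamma\cap K)\ge 1$. By countable stability of Hausdorff dimension it suffices to produce compact sets $E_n\subseteq K$ with $\dim_H E_n\to 1$, each lying on a rectifiable curve, and then to assemble these curves into a single rectifiable one. To cover $E_n$ by a rectifiable curve I will invoke Lemma \ref{l:cover}, so I also need $\overline{\dim}_M E_n<1$; thus for each $n$ I aim for a set with $\dim_H E_n\ge s_n$ and $\overline{\dim}_M E_n<1$, where $s_n\in(1-1/n,1)$ is a fixed sequence with $s_n\uparrow 1$. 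As a normalization I first observe that a non-degenerate continuum meets two opposite faces of a suitable cube: choosing a direction $u_0$ realizing $\diam K$ and rotating it onto a coordinate axis, $K$ fits in an axis-parallel cube of edge $\diam K$ and touches the two faces orthogonal to that axis, so after a similarity I may assume this cube is $[0,1]^d$. This places us in position to apply Lemma \ref{l:cube} repeatedly.

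The heart of the argument is the construction of $E_n$. Fix $s_n$ and a large integer $N=N_n$ to be chosen. Starting from the normalized configuration I iterate Lemma \ref{l:cube} with parameter $N$: each cube of the current generation, together with its sub-continuum touching two opposite faces, is split into $N$ non-overlapping subcubes of $N$ times smaller edge, each again carrying a sub-continuum of $K$ touching two opposite faces. Crucially, the lemma hands me all $N$ candidates but I keep only $M=\lceil N^{s_n}\rceil$ of them; every kept subcube is a legitimate starting configuration for the next step, so the recursion continues indefinitely and its intersection is the desired Cantor set $E_n\subseteq K$. After $k$ generations $E_n$ is covered by $M^k$ non-overlapping cubes of edge $N^{-k}$, whence $\overline{\dim}_M E_n\le \frac{\log M}{\log N}\le s_n+\frac{\log 2}{\log N}$, which is $<1$ once $N$ is large. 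For the lower bound I place the uniform (self-similar) probability measure $\mu$ on $E_n$, giving mass $M^{-k}$ to each generation-$k$ cube. Because these cubes are non-overlapping, a ball of radius $\rho\in[N^{-(k+1)},N^{-k})$ meets only $O_d(1)$ of them, so $\mu(B(x,\rho))\le C_d M^{-k}\le C_d'\,\rho^{s_n}$ (using $M\ge N^{s_n}$), and the mass distribution principle yields $\dim_H E_n\ge s_n$. Thus $1-1/n<s_n\le \dim_H E_n$ while $\overline{\dim}_M E_n<1$, so by Lemma \ref{l:cover} some rectifiable curve $\gamma_n$ covers $E_n$.

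Finally I assemble the $\gamma_n$ into one rectifiable curve, making the pieces shrink to a common point so that both their lengths and the connecting segments are summable. Running the nested form of Lemma \ref{l:cube}, I fix a strictly decreasing sequence of cubes $Q^{(0)}\supseteq Q^{(1)}\supseteq\cdots$ of edge lengths tending to $0$, each holding a sub-continuum of $K$ touching opposite faces; their intersection is a single point $p\in K$. I build $E_n$ inside the continuum lying in $Q^{(n)}$, so that $\gamma_n$ stays within a bounded neighbourhood of $Q^{(n)}$ and $\diam\gamma_n=O(\diam Q^{(n)})\to 0$. Inspecting Lemma \ref{l:cover} shows that, by scaling, a set of diameter $\delta$ with the prescribed Minkowski data admits a rectifiable cover of length $\le C(s_n)\,\delta$; since the combinatorial type of $E_n$ fixes $C(s_n)$ before $\diam Q^{(n)}$ is chosen, I may descend deep enough in the nesting to guarantee $\length(\gamma_n)\le 2^{-n}$. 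Joining consecutive curves by straight segments, each of length $O(\diam Q^{(n)})\le O(2^{-n})$, produces a curve of total length $\le \sum_n(\length(\gamma_n)+O(\diam Q^{(n)}))<\infty$ that converges to $p$; adjoining $p$ as the final point and reparametrizing by arc length gives a genuine $1$-Lipschitz, hence rectifiable, curve $\Gamma$ with $\bigcup_n E_n\subseteq\Gamma$. Then $\dim_H(\Gamma\cap K)\ge\sup_n\dim_H E_n\ge\sup_n s_n=1$, and equality follows from the automatic upper bound.

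I expect the main obstacle to be the simultaneous control of the two dimensions of $E_n$: the Minkowski upper bound and the Hausdorff lower bound must be squeezed strictly between $s_n$ and $1$, which forces the balance $M\approx N^{s_n}$ with $N$ large and requires the Frostman estimate to survive the fact that the subcubes delivered by Lemma \ref{l:cube} are only guaranteed non-overlapping, possibly face-adjacent, rather than genuinely separated. A secondary but real nuisance is the length bookkeeping in the assembly, where the blow-up of the covering constant $C(s_n)$ as $s_n\uparrow 1$ must be defeated by shrinking $\diam Q^{(n)}$.
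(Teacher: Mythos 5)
Your proposal is correct and follows essentially the same route as the paper: iterate Lemma \ref{l:cube} to produce Cantor subsets $A_n\subseteq K$ with $\dim_H A_n\ge 1-1/n$ but $\overline{\dim}_{M}(A_n)<1$, cover each by a rectifiable curve via Lemma \ref{l:cover}, and concatenate the pieces so that they shrink toward a point of $K$. The only deviations are in the details: you get the lower bound on $\dim_H$ from the mass distribution principle where the paper runs the covering estimate by hand, and you control $\length(\gamma_n)$ by rescaling the quantitative bound implicit in the proof of Lemma \ref{l:cover}, whereas the paper simply splits $\Gamma_n$ into finitely many pieces of length at most $2^{-n}$ and keeps one whose intersection with $A_n$ still has dimension at least $1-1/n$.
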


\begin{proof} By considering a similar copy of $K$
we may assume that $K$ is contained by a unit cube $Q$
and $K$ has a point on each of two opposite sides of $Q$.

Let $\varepsilon>0$ be arbitrary. First we prove the weaker result
that there exists $A\subseteq K$ such that
$ 1-\varepsilon \leq \dim_{H}A= \overline{\dim}_{M}(A)<1$.
By Lemma \ref{l:cover} $A$ is covered by a rectifiable curve.
Let us fix an integer $N\geq 2$ for which
$s:=\frac {\log (N-1)}{\log N} \geq 1-\varepsilon$.
We construct $A\subseteq K$ such that $\dim_{H}A=\overline{\dim}_{M}(A)=s$.
Set $\mathcal{I}_{n}=\{1,\dots, N-1\}^{n}$ for every $n\in \mathbb{N}^{+}$.
Iterating Lemma \ref{l:cube} implies that for all $n\in \mathbb{N}^{+}$ and
$(i_1,\dots, i_n)\in \mathcal{I}_{n}$
there are cubes $Q_{i_1 \dots i_{n}}$ in $Q$
with edge length $\frac{1}{N^{n}}$ such that
$Q_{i_1 \dots i_{n}}\subseteq Q_{i_1 \dots i_{n-1}}$,
and there are continua $K_{i_1 \dots i_{n}} \subseteq K$ such that
$K_{i_1 \dots i_{n}}\subseteq Q_{i_1 \dots i_{n}}\cap K_{i_1\dots i_{n-1}}$
and $K_{i_1 \dots i_{n}}$ has a point on each of
two opposite sides of $Q_{i_1 \dots i_{n}}$. Set
\begin{equation*}
A_n=\bigcup_{i_1=1}^{N-1} \! \cdots \!
\bigcup_{i_n=1}^{N-1} K_{i_1 \dots i_{n}},
\end{equation*}
and let
\begin{equation*}
A=\bigcap_{n=1}^{\infty} A_n.
\end{equation*}
Clearly, $A\subseteq K$ is compact.

On the one hand, as $A\subseteq A_n$ and $A_n$ is covered by $(N-1)^{n}$
many cubes of edge length $\frac{1}{N^{n}}$, we obtain that
$N(A_n, \sqrt{d}/N^{n})\leq (N-1)^{n}$ for all $n\in \mathbb{N}^{+}$.
Therefore $\overline{\dim}_{M}(A)\leq \frac {\log (N-1)}{\log N}=s$.

On the other hand, we prove that $\mathcal{H}^{s}(A)>0$.
Assume that $A\subseteq \bigcup_{j=1}^{\infty}  U_j$,
it is enough to prove that
$ \sum_{j=1}^{\infty} (\diam U_j)^{s}\geq \frac{1}{2^d(N-1)}$.
Clearly, we may assume that $U_j$ is a non-empty open set
with $\diam U_j < 1$ for each $j$,
and the compactness of $A$ implies that
there is a finite subcover $A\subseteq \bigcup_{j=1}^{k} U_j$.
Let us fix $n_0\in \mathbb{N}^{+}$ such that
$\frac{1}{N^{n_0}}<\min_{1\leq j\leq k} \diam U_j$.
For $j\in \{1,\dots,k\}$ let
$$ t_j = \# \left\{(i_1,\dots,i_{n_0})\in \mathcal{I}_{n_0}:
U_j\cap K_{i_1\dots i_{n_0}}\neq \emptyset\right\} .$$
Since $A\subseteq \bigcup_{j=1}^{k}U_j$, we have
\begin{equation} \label{eq:tj} \sum_{j=1}^{k} t_{j}\geq
(N-1)^{n_0}.
\end{equation}
Now we show that for all $j\in \{1,\dots,k\}$
\begin{equation} \label{eq:Uj} (\diam U_j)^{s} \geq
\frac{t_j}{2^{d}(N-1)^{n_0+1}}.
\end{equation}
Let us fix $j\in \{1,\dots, k\}$. There exists $0\leq m<n_0$ such that
$\frac{1}{N^{m+1}}\leq \diam U_j < \frac{1}{N^{m}}$.
Clearly, the number of cubes $Q_{i_1 \dots i_m}$ at level $m$ that
intersect $U_j$ is at most $2^d$. Therefore $t_j\leq 2^{d} (N-1)^{n_0-m}$.
On the other hand, $\diam U_j\geq \frac{1}{N^{m+1}}$
implies $(\diam U_j)^{s}\geq \frac{1}{(N-1)^{m+1}}$,
and \eqref{eq:Uj} follows. By \eqref{eq:tj} and \eqref{eq:Uj} we obtain
$$ \sum_{j=1}^{k} (\diam U_j)^{s} \geq
\sum_{j=1}^{k} \frac{t_j}{2^{d}(N-1)^{n_0+1}}\geq \frac{1}{2^d(N-1)}.$$
Hence $\mathcal{H}^{s} (A)>0$. Therefore $\dim_{H} A\geq s$,
so $s\leq \dim_{H}A \leq \overline{\dim}_{M}(A)\leq s$.
Thus $1-\varepsilon \leq \dim_{H} A=\overline{\dim}_{M}(A)<1$.

Now we are in a position to prove that there exists
a rectifiable curve $\Gamma$ with $\dim_{H}(\Gamma \cap K) = 1$.
Pick an arbitrary point $\vec{x} \in K$
and let $K_n$ be the intersection of $K$ and
the closed ball of radius $1/2^n$ centered at $\vec{x}$.
Let $C_n$ denote the component of $K_n$ containing $\vec{x}$.
Since $C_n$ is a non-degenerate continuum by Lemma \ref{l:comp}, we know that
there exists $A_n\subseteq C_n$ such that
$1-\frac{1}{n}\leq \dim_{H} A_n=\overline{\dim}_{M}(A)<1$.
Therefore Lemma \ref{l:cover} implies that
there exist rectifiable curves $\Gamma_n$ covering $A_n$.
We may assume that the endpoints of $\Gamma_n$ are in $A_n$.
We can also assume that the length of $\Gamma_n$ is at most $1/2^n$.
(Otherwise we split up $\Gamma_n$ into finitely many parts,
each having length at most $1/2^n$; then one of these parts
intersects $A_n$ in a set of Hausdorff dimension at least $1-\frac{1}{n}$.)
Let us concatenate the curves $\Gamma_n$ with line segments.
Then the full length of the line segments is
at most $2\sum_{n=1}^{\infty} \frac{1}{2^n}=2$,
the full length of the curves $\Gamma_n$ is at most
$\sum_{n=1}^{\infty} \frac{1}{2^n}=1$,
so we get a rectifiable curve $\Gamma$ that
covers $\bigcup_{n=1}^{\infty} A_n$.
As $\dim_{H}\left(\bigcup_{n=1}^{\infty} A_n\right)=1$,
the intersection $\Gamma \cap K$ has Hausdorff dimension $1$.
The proof is complete.
\end{proof}


\begin{thebibliography}{99}

\bibitem{E} R. Engelking, \textit{General topology}, Revised and
completed edition, Heldermann Verlag, 1989.

\bibitem{F} K. Falconer, \textit{Fractal geometry: Mathematical
foundations and applications}, Second Edition, John Wiley \& Sons,
2003.

\bibitem{G} M. Gromov, \textit{Partial differential relations},
Springer-Verlag, 1986.

\bibitem{BK} B. Kirchheim, private communication, 2011.

\bibitem{Ma} P. Mattila, \textit{Geometry of sets and measures in
Euclidean spaces}, Cambridge Studies in Advanced Mathematics No.~44,
Cambridge University Press, 1995.

\end{thebibliography}
\end{document}